\theoremstyle{definition}
\newtheorem*{defn}{Definition}
\newtheorem{thm}{Theorem}[section]
\newtheorem{lem}{ \bf Lemma}[section]
\newtheorem{prop}{\bf Proposition}[section]
\newtheorem{cor}{\bf Corollary}[section]
\theoremstyle{remark}
\theoremstyle{example}
\newcommand{\be}{\begin{equation}}
\newcommand{\ee}{\end{equation}}
\newcommand{\Bea}{\begin{eqnarray*}}
\newcommand{\Eea}{\end{eqnarray*}}
\newcommand{\bea}{\begin{eqnarray}}
\newcommand{\eea}{\end{eqnarray}}
\numberwithin{equation}{section}
\begin{document}

\title{Volume growth functions of complete Riemannian manifolds with positive scalar curvature}
\author{Anushree Das \and Soma Maity }
\date{October 2024}

\address{Department of Mathematical Sciences, Indian Institute of Science Education and Research Mohali, \newline Sector 81, SAS Nagar, Punjab- 140306, India.}
\email{somamaity@iisermohali.ac.in}

\address{Department of Mathematical Sciences, Indian Institute of Science Education and Research Mohali, \newline Sector 81, SAS Nagar, Punjab- 140306, India.}
\email{ph20016@iisermohali.ac.in}

\subjclass{Primary 53C20,53C21,53C23}

\begin{abstract} 
Let $M$ be an open manifold of dimension at least $3$, which admits a complete metric of positive scalar curvature. For a function $v$ with bounded growth of derivative, whether $M$ admits a metric of positive scalar curvature with volume growth of the same growth type as $v$ is unknown. We answer this question positively in the case of manifolds, which are infinite connected sums of closed manifolds that admit metrics of positive scalar curvature. To define a metric of positive scalar curvature with a certain volume growth type on $M$, we use the Gromov-Lawson construction of metrics with positive scalar curvature on connected sums and Grimaldi-Pansu's construction of metrics of bounded geometry of certain volume growth type on open manifolds.  We generalize this result to manifolds, which are infinite connected sums of similar closed manifolds along lower-dimensional spheres.

\end{abstract}


\keywords{growth of volume, positive scalar curvature, bounded geometry.}
\maketitle

\section{Introduction} On a Riemannian manifold with positive scalar curvature, the volume of a geodesic ball of a sufficiently small radius is less than the volume of an Euclidean ball of the same radius \cite{GA}. If the scalar curvature is negative, then the reverse inequality holds. However, the positive scalar curvature is a weak condition in obtaining such a comparison of the volume of balls of large radii. Relations between volume growth and positive scalar curvature have been studied in \cite{GY}, \cite{CLS}, and \cite{MW}. In this paper, we study the functions appearing as the growth of the volume of balls on non-compact complete Riemannian manifolds with positive scalar curvature.  

A Riemannian manifold $(M,g)$ has bounded geometry if the injectivity radius $i_g\geq\iota$ and the absolute value of the sectional curvature $|K_g|\leq \kappa$ for some $\iota,\kappa>0$. Given a Riemannian manifold $(M,g)$ and a point $o\in M$, the volume growth function $v(r)$ is the volume of $B(o,r)$, the ball of radius $r$ centered at $o$. Badura, Funar, Grimaldi, and Pansu investigate those functions which are volume growth functions of a Riemannian manifold with bounded geometry, and their relations with the topology of the manifold in \cite{GP}, \cite{BM}, \cite{FG}. 

\begin{defn}
    A function $v:\mathbb{N} \to \mathbb{R}_+$ is said to have bounded growth of derivative if there exists a positive integer $L$ such that, $\forall n\in \mathbb{N}$,
    $$\frac{1}{L}\leq v(n+2)-v(n+1)\leq L(v(n+1)-v(n)).$$
\end{defn}

We call a function with bounded growth of derivative a \emph{bgd-function} in short. For any Riemannian manifold with bounded geometry, the restriction of the volume growth function to $\mathbb{N}$ is a bgd-function \cite{GP}.
Two non-decreasing functions $f,h:\mathbb{N}\rightarrow \mathbb{R}_{+}$ are said to be of the same growth type if there exists an integer $A\geq 1$ such that for all $n\in \mathbb{N}$, 
\Bea
f(n)\leq Ah(An+A)+A \ \ \text{and} \ \ h(n)\leq Af(An+A)+A. 
\Eea

The volume growth functions of a Riemannian manifold based at different basepoints all have the same growth type. In \cite{GP}, R. Grimaldi and P. Pansu classified the volume growth functions of one-ended complete Riemannian manifolds with bounded geometry as equivalence classes of bgd-functions of the same growth type. The authors generalized this to manifolds of infinitely ends and further studied the properties of these metrics in \cite{DM}. If an open manifold $M$ admits a complete metric with positive scalar curvature, then it would be interesting to find a metric of positive scalar curvature on $M$ with the volume growth function in the class of a given bgd-function.  

Let $\mathcal{M}_m$ be the set of those orientable closed manifolds of dimension $m\geq 3$ which admit a metric of positive scalar curvature. These manifolds have been studied intensively. We refer to \cite{GL}, \cite{SY}, \cite{SS}, and \cite{JR} for examples and classification of closed manifolds admitting metrics with positive scalar curvature. 
\begin{defn} \label{connectedsum} Let $\mathcal{U}$ be a collection of $m$-dimensional closed manifolds. We consider an open manifold $M$, which is a connected sum of manifolds from $\mathcal{U}$ along an infinite locally finite tree $T$, i.e. elements of $\mathcal{U}$ are placed on the vertices of the tree. An edge between two vertices represents a connected sum between the respective pieces. To take a connected sum of two $m$-dimensional closed manifolds $A$ and $B$, remove one disc of dimension $m$ from each of $A$ and $B$, and attach the resulting boundary spheres by an orientation reversing diffeomorphism. We call the resulting open manifold an infinite connected sum of elements of $\mathcal{U}$.
\end{defn}
An open Riemannian manifold is said to satisfy the uniform positive scalar curvature condition if the scalar curvature is uniformly bounded below by a positive constant $\alpha$. In \cite{BBM0}, Bessières, Besson, and Maillot showed that if an open 3-manifold admits a complete Riemannian metric of bounded geometry with uniform positive scalar curvature, then the manifold is diffeomorphic to a manifold that is an infinite connected sum along a locally finite graph of finitely many spherical manifolds. Uniformly positive scalar curvature on open 3-manifolds has also been studied in \cite{JW},\cite{BBMM}, \cite{CD}. Some topological obstructions to the existence of a complete Riemannian metric on certain open 4-manifolds with uniformly positive scalar curvature are obtained in \cite{CMM}. In this paper, we prove the following theorem.
 
\begin{thm}\label{main}
    Let $M$ be an open, orientable $m$-manifold that is an infinite connected sum of elements of $\mathcal{U}\subset\mathcal{M}_m$ for an $m\geq 3$. Given a bgd-function $v$, $M$ admits a complete Riemannian metric of bounded geometry with positive scalar curvature such that the volume growth function is in the same growth class as $v$. Moreover, if $\mathcal{U}$ is finite, then the metric has uniformly positive scalar curvature.
\end{thm}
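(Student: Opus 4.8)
The plan is to combine two known constructions in a careful gluing argument. Grimaldi--Pansu (and its extension to infinitely many ends in \cite{DM}) provides, for any bgd-function $v$, a complete Riemannian metric of bounded geometry on some open manifold whose volume growth function lies in the growth class of $v$; crucially that construction builds the model manifold by gluing together standardized building blocks (``bricks'') along a tree in a way that controls how many bricks lie in each ball. The Gromov--Lawson construction \cite{GL} provides, for a connected sum of closed manifolds from $\mathcal{M}_m$, a metric of positive scalar curvature obtained by performing the surgery/connected-sum operation along small spheres while keeping the scalar curvature positive, and this can be done so that the metric is a fixed product-type ``neck'' metric near each gluing sphere. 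The main idea is to realize the given manifold $M$, which by hypothesis is an infinite connected sum of elements of $\mathcal{U}$ along a locally finite tree $T$, as the same combinatorial object as a Grimaldi--Pansu model: I will decorate $T$ so that each vertex carries a fixed psc metric on the corresponding closed manifold, each edge carries a Gromov--Lawson neck, and then rescale/elongate the necks using the Grimaldi--Pansu recipe so that the resulting metric has the prescribed volume growth.

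The key steps, in order, are as follows. First I would fix, for each diffeomorphism type occurring in $\mathcal{U}$, a reference metric of positive scalar curvature and a standard disc of small radius in it on which the metric is close to the round hemisphere, so that Gromov--Lawson surgery applies uniformly; since $\mathcal{U}$ may be infinite I need these choices to have a uniform lower bound on scalar curvature and uniform two-sided curvature and injectivity radius bounds only when $\mathcal{U}$ is finite — in general I only keep positivity of the scalar curvature, not uniformity. Second, I would describe the ``brick with a neck'' building block: a punctured element of $\mathcal{U}$ with cylindrical ends $S^{m-1}\times[0,\ell]$ attached, carrying a psc metric that is product near the cylinder ends, where the lengths $\ell$ of the necks are the free parameters. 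Third, I would invoke the Grimaldi--Pansu / \cite{DM} construction to choose the neck lengths (equivalently, to choose how the bricks are distributed along $T$, possibly subdividing edges by inserting extra cylinder bricks $S^{m-1}\times[0,1]$, which also carry psc product metrics for $m\geq 3$) so that the number of bricks inside $B(o,r)$ grows like $v(r)$; since each brick contributes a bounded amount of volume, this forces the volume growth function into the class of $v$. Fourth, I would verify bounded geometry: each brick and each neck is modeled on finitely many (or, in the finite case, uniformly controlled) local pieces, necks have length bounded below, and the gluing regions are uniformly controlled, giving a uniform injectivity radius lower bound and two-sided sectional curvature bound. Finally, in the case $\mathcal{U}$ finite, I would note that there are then only finitely many local models, so the scalar curvature, being positive and locally constant-in-type, is bounded below by a positive constant, yielding uniform positive scalar curvature.

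The main obstacle I expect is the simultaneous control of three competing requirements along the necks: Gromov--Lawson surgery needs the neck to be ``thin'' (small radius) to preserve positivity of scalar curvature, bounded geometry needs a uniform \emph{lower} bound on injectivity radius and a two-sided curvature bound along the necks of arbitrary length, and the Grimaldi--Pansu volume prescription needs to tune neck lengths (or the number of inserted cylinder bricks) over an unbounded range. Reconciling these amounts to fixing once and for all a \emph{single} scale $\varepsilon$ for the neck radius at which Gromov--Lawson works and at which the round cylinder $\varepsilon S^{m-1}\times\mathbb{R}$ has bounded geometry and positive scalar curvature (using $m\ge 3$ so that $(m-1)(m-2)/\varepsilon^2>0$ dominates), and then doing all the volume tuning by varying \emph{lengths} rather than radii — so the delicate part is checking that the Gromov--Lawson transition metric between a fixed reference psc metric and the fixed standard $\varepsilon$-neck can be made of bounded geometry with constants independent of which element of $\mathcal{U}$ (in the finite case) or at least locally, and that elongating a neck to arbitrary length, and gluing bricks head-to-tail, never degrades these constants. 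A secondary, more bookkeeping-type obstacle is translating the tree $T$ underlying $M$ into the tree-with-bricks data required by the \cite{DM} construction while respecting local finiteness and the growth count; this should follow by subdividing edges and padding with cylinder bricks, but it needs to be spelled out so that the brick-count inside $B(o,r)$ is genuinely comparable to $v(r)$.
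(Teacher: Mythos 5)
Your overall strategy --- Gromov--Lawson necks of a fixed small radius $\delta$ glued in a product form, volume tuned only by lengths/combinatorics, bounded geometry from finitely many local models, and uniform positivity of scalar curvature when $\mathcal{U}$ is finite --- matches the paper's. But there is a genuine gap in your volume-tuning mechanism. You propose to realize the growth class of $v$ by ``choosing neck lengths'' and ``subdividing edges by inserting extra cylinder bricks $S^{m-1}\times[0,1]$.'' Both operations only insert degree-two vertices into the given tree $T$: they spread the existing pieces farther apart and hence can only \emph{slow down} the volume growth relative to what $T$ already supports. They can never speed it up. A bgd-function $v$ may grow exponentially (any base $\lambda<2$ after normalization), while $M$ could be, say, an infinite connected sum along a ray; since every piece has diameter bounded below by the bounded-geometry constraint, a ball of radius $r$ in any metric obtained by your procedure meets only $O(r)$ pieces, and (when $\mathcal{U}$ is finite) each contributes bounded volume, so the growth is at most linear. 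No choice of neck lengths fixes this. What is needed --- and what the paper does --- is to \emph{add branching} without changing the diffeomorphism type, by taking extra connected sums with $m$-spheres: since $X\#\mathbb{S}^m\cong X$, one may graft onto $T$ an entire auxiliary tree $T_{S,v}$ of punctured spheres whose growth is exactly $v$, with thrice-punctured spheres ($S_3$ pieces) serving as the branch points. Your cylinder bricks are exactly the twice-punctured spheres $S_2$; the essential missing ingredient is the three-ended piece that creates branching.

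A second, related gap: you assert that ``each brick contributes a bounded amount of volume, [which] forces the volume growth function into the class of $v$.'' When $\mathcal{U}$ is infinite, the original summands have unbounded volume and unbounded diameter, so counting bricks does not control volume. The paper handles this by placing the original pieces $Q_j$ only at a sparse set $S=\bigcup_j[n_j,n_j+t_j-1]$ of trunk levels of vanishing lower density, with the positions $n_j$ chosen (via Proposition~17 of Grimaldi--Pansu, restated as Proposition~\ref{lnj}) as a function of the height parameters $t_j$ and the volume bounds $U_j=\max v'_{Q_j}$ of the pieces, precisely so that their uncontrolled contributions do not push the discrete growth function out of the class of $v$. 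Without both of these devices --- the branching sphere-tree for the lower/upper bound on growth, and the sparse placement of the original summands --- the comparison between brick count and $v(r)$ that your argument rests on does not go through.
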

 As per the classification in \cite{BBM0}, an open, orientable $3$-manifold that admits a metric of bounded geometry and uniform positive scalar curvature is a connected sum of finitely many elements of $\mathcal{M}_3$ along some graph. Using Theorem $2.3$ of \cite{BBM}, we can upgrade the graph to a tree at the expense of adding some $\mathbb{S}^1\times \mathbb{S}^2$ factors, which gives us the following corollary.
 \begin{cor} Let $M$ be an open, orientable $3$-manifold that admits a complete metric of bounded geometry with uniformly positive scalar curvature. Given a bgd-function $v$, $M$ admits a complete Riemannian metric of bounded geometry with uniformly positive scalar curvature such that the volume growth function is in the same growth class as $v$. 
 \end{cor}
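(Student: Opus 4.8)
The plan is to reduce the Corollary to Theorem~\ref{main} using the structure theory of open $3$-manifolds with uniformly positive scalar curvature. First I would invoke the classification of Bessières, Besson, and Maillot \cite{BBM0}: since $M$ is an open, orientable $3$-manifold carrying a complete metric of bounded geometry with uniformly positive scalar curvature, $M$ is diffeomorphic to an infinite connected sum, along some locally finite graph $G$, of finitely many spherical space forms $N_1,\dots,N_k$. Each $N_i$ is a closed orientable $3$-manifold admitting a metric of positive scalar curvature, hence $N_i\in\mathcal{M}_3$.

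The issue is that Theorem~\ref{main} is stated for connected sums along a \emph{tree}, while $G$ may contain cycles. To remove them I would apply Theorem~$2.3$ of \cite{BBM}, which replaces the graph $G$ by a locally finite tree $T$ at the cost of inserting copies of $\mathbb{S}^1\times\mathbb{S}^2$ at the new vertices, the resulting manifold remaining diffeomorphic to $M$. Since $\mathbb{S}^1\times\mathbb{S}^2$ is a closed orientable $3$-manifold admitting a metric of positive scalar curvature, it lies in $\mathcal{M}_3$. Thus, setting $\mathcal{U}=\{N_1,\dots,N_k,\ \mathbb{S}^1\times\mathbb{S}^2\}$, the manifold $M$ is an infinite connected sum of elements of the \emph{finite} set $\mathcal{U}\subset\mathcal{M}_3$ along the locally finite tree $T$, so the hypotheses of Definition~\ref{connectedsum} and Theorem~\ref{main} are satisfied with $m=3$.

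With this reduction in place, Theorem~\ref{main} applies directly: given the bgd-function $v$, $M$ admits a complete Riemannian metric of bounded geometry with positive scalar curvature whose volume growth function lies in the growth class of $v$; and since $\mathcal{U}$ is finite, the ``moreover'' clause of Theorem~\ref{main} ensures the scalar curvature is uniformly bounded below by a positive constant. This is exactly the assertion of the Corollary.

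I expect the only point genuinely requiring care is verifying that the graph-to-tree surgery of \cite{BBM} keeps the index set of building blocks finite (it does, since only $\mathbb{S}^1\times\mathbb{S}^2$ is adjoined) and preserves local finiteness of the underlying tree, so that Theorem~\ref{main} is legitimately applicable; the remainder is a formal concatenation of the cited results and involves no new analytic or geometric construction.
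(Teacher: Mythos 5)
Your proposal is correct and follows exactly the paper's own route: the classification of \cite{BBM0} gives $M$ as a connected sum of finitely many spherical manifolds along a locally finite graph, Theorem~$2.3$ of \cite{BBM} upgrades the graph to a tree at the cost of adjoining $\mathbb{S}^1\times\mathbb{S}^2$ factors, and then Theorem~\ref{main} with the finite collection $\mathcal{U}$ yields the uniformly positive scalar curvature metric with the prescribed volume growth.
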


The proof of the theorem involves constructing a metric with the desired volume growth via an adaptation of the construction used in \cite{GP} and \cite{DM}. We crucially use the construction due to M. Gromov and H. Lawson in \cite{GL} to put metrics of positive scalar curvature in connected sums. For details of this construction, we refer to \cite{MWalsh}. Alternatively, there is an equivalent construction by R. Schoen and S.T. Yau in \cite{SY} which would also let us construct a metric of positive scalar curvature on connected sums. Let $g_1$ and $g_2$ be metrics of positive scalar curvature on $X_1$ and $X_2$ respectively. The construction of the metric of positive scalar curvature on $X_1\# X_2$ by Gromov and Lawson is by removing discs $\mathbb{D}^n$ of small radii from both $X_1$ and $X_2$, where $n$ is the dimension of $X_1$ and $X_2$. A metric is then chosen on some arbitrarily small neighbourhood of each of the removed discs such that outside this neighbourhood the metric remains $g_1$ or $g_2$, respectively, while in a very small neighbourhood of the boundary spheres, it is a product metric $dt^2 + g_{S_\epsilon}$ where $g_{S_\epsilon}$ is the standard metric on a sphere of radius $\epsilon$. Here, $\epsilon$ can be chosen arbitrarily as long as it is sufficiently small. The boundary spheres are then joined to each other via an isometry to get a metric of positive scalar curvature on the entire $X_1\# X_2$.

To prove the main theorem, we start with an orientable and complete $m$-manifold, which is a connected sum of elements of $\mathcal{M}_m$ along a tree $T$. Since it is an infinite connected sum of closed manifolds, we put metrics of positive scalar curvature on each of those closed manifolds. We then modify the connected sum by also taking some connected sums with spheres, which does not change the diffeomorphism type of the manifold. The spheres are also given metrics of positive scalar curvature. The number of additional spheres taken in the connected sum is carefully chosen to ensure that the resulting metric on the entire manifold has its volume growth function in the desired growth class. The construction is inspired by Grimaldi and Pansu's proof in \cite{GP}.

The Gromov-Lawson construction of metrics with positive scalar curvature also holds in the case of connected sums of manifolds along lower-dimensional spheres. Using this, we show that the same conclusion as in Theorem \ref{main} holds for open $m$-manifolds that are infinite connected sums of elements from $\mathcal{M}_m$ along lower-dimensional spheres in Theorem \ref{main 2}. 

\subsection*{Acknowlegdement}
The first author is supported via a research grant from the National Board of Higher Mathematics, India.

\section{Construction of the metric}
Let $M$ be a complete, orientable, open manifold of dimension $m$ such that $M$ is an infinite connected sum of elements of $\mathcal{M}_m$. Represent this connected sum in the form of an infinite tree $T$, where each vertex of $T$ represents a factor of the connected sum and each edge represents the connected sum operation. Thus, each vertex of $T$ represents an element of $\mathcal{M}_m$. 

Define $Q_i$ to be the submanifold (with the boundary spheres due to the connected sum operation) of $M$ which is the union of all the components attached to the vertices on level $i$ of $T$. The number of connected components of $Q_i$ equals the number of vertices of $T$ on level $i$. The number of boundary spheres on each component of $Q_i$ is given by the degree of the vertex representing that component in $T$. We denote by $\partial^- Q_i$ the union of the boundary sphere components by which $Q_i$ is attached to the piece $Q_{i-1}$, and by $\partial^+Q_i$ we denote those boundary sphere components by which $Q_i$ is attached to $Q_{i+1}$. Note that $M$ can be recovered by attaching the components of $Q_i$ with those of $Q_{i+1}$ along the respective boundary spheres for all $i$. We want to control the volume of $M$ by adding connected sums of spheres between the $Q_i$ and $Q_{i+1}$. This does not change the diffeomorphism type of the manifold.

We start with a bgd-function $v$.  By Lemma $11$ and Lemma $10$ from \cite{GP}, we can assume without loss of generality that $v$ satisfies the following conditions:
\begin{lem}\label{lw}
Let $v : \mathbb{N} \to \mathbb{N}$ satisfy:
\begin{itemize}
\item $v(0) = 1$.
\item For all $n \in \mathbb{N}$, $2 \leq v(n + 2) - v(n + 1) \leq 2(v(n + 1) - v(n))$.
\item $v(n) = O(\lambda^n)$ for some $\lambda < 2$.
\end{itemize}
This is because, for any bgd-function $v$, it is possible to get a bgd-function of the same growth type as $v$, which satisfies the conditions of the above lemma and lets us construct an infinite tree with growth $v$. Fix a subset $S \subset N$ of vanishing lower density, i.e. $\liminf_{n\to \infty}
\frac{|S \cap \{0,...,n\}|}{n} = 0$.
There exists an admissible rooted tree ${T}_{S,v}$ with bounded geometry and with growth exactly $v$ at the root.
\end{lem}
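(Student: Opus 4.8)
The plan is to prove the two assertions of the lemma in turn: first, that any bgd-function can be replaced by one of the same growth type satisfying the three normalizing bullets, and second, that such a normalized $v$, together with any prescribed $S\subset\mathbb{N}$ of vanishing lower density, can be realized as the vertex-growth of a bounded-degree rooted tree. Throughout I write $d(n)=v(n+1)-v(n)$, so that the bgd-condition reads $\tfrac1L\le d(n+1)\le L\,d(n)$, and I use repeatedly that a linear reparametrization of the domain and an affine rescaling of the values are growth equivalences, as is the composition of two growth equivalences.

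For the normalization I would argue by a short chain of such reductions. The key one is a \emph{slowing down}: fix an integer $k$ with $L^{1/k}<2$ and replace $v$ by the function $w$ obtained by subdividing each unit step of the domain into $k$ steps, declaring the new discrete derivative to be the constant $d(n)/k$ on the block coming from the old step $n\mapsto n+1$, and inserting between consecutive blocks a transition zone of fixed length $\ell\le k$ in which the derivative ramps geometrically, by a factor $\le L^{1/k}<2$ per step, from $d(n)/k$ to $d(n+1)/k$. The new domain is stretched by the bounded factor $k+\ell$ per step and the total extra volume contributed by all transition zones is $O(v)$ (each such zone adds at most $d(n+1)$), so $w$ has the same growth type as $v$; by construction $w(j+1)-w(j)\le 2\bigl(w(j)-w(j-1)\bigr)$ and $w(j)=O(\lambda^{j})$ with $\lambda=L^{1/k}<2$. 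A final rescaling of the values by a large constant pushes the discrete derivative up to be $\ge 2$ without spoiling the two previous properties, a vertical shift $w\mapsto w-(w(0)-1)$ arranges $w(0)=1$, and rounding to integers is harmless; this yields a normalized $v$ as in the three bullets.

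For the second part, set $N_{0}=v(0)=1$ and $N_{n}=v(n)-v(n-1)=d(n-1)$ for $n\ge 1$, and build a rooted tree with exactly $N_{n}$ vertices at combinatorial level $n$, so that the ball of radius $n$ about the root has exactly $\sum_{j\le n}N_{j}=v(n)$ vertices, i.e. growth exactly $v$. The normalization gives $2\le N_{n+1}=d(n)\le 2\,d(n-1)=2N_{n}$ for $n\ge 1$, so every level from the first on is nonempty (hence the tree is infinite) and has at most twice as many vertices as the previous one; thus the level-$(n+1)$ vertices can be attached to the level-$n$ vertices with at most two children each, some parents getting none at the levels where $d$ decreases. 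All degrees are then $\le 3$ apart from the root, whose degree $N_{1}=d(0)$ is a fixed finite number, so the tree has bounded geometry. To incorporate $S$, I would reserve the levels indexed by $S$ as non-branching (each of their vertices having exactly one child) and carry out the above distribution only over the levels in $\mathbb{N}\setminus S$; since $S$ has vanishing lower density there are arbitrarily long $S$-free stretches over which the required growth can be absorbed with out-degrees $\le 2$, and since deleting a density-zero set of levels does not change the growth type, a finite correction pins the growth of the resulting \emph{admissible} tree $T_{S,v}$ (admissible in the sense of \cite{GP}, i.e. one that can subsequently be fattened to a bounded-geometry manifold) to be exactly $v$.

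The step I expect to be the main obstacle is the last one: realizing the growth \emph{exactly}, rather than only up to growth type, while simultaneously respecting the constraint imposed by an arbitrary sparse set $S$ and keeping all vertex degrees uniformly bounded — in particular, handling the levels where the discrete derivative $d$ drops (which force leaves and break the naive branch-at-every-level picture) and arranging the normalization so that everything stays compatible with $S$. This is precisely the content of Lemmas 10 and 11 of \cite{GP}, to which the remaining details can be referred.
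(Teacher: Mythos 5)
Your proposal follows the paper's construction in its essentials — level $n$ of the tree carries exactly $N_n=v(n)-v(n-1)$ vertices, the normalized inequality $2\le d(n)\le 2d(n-1)$ guarantees that out-degree at most $2$ suffices and that the tree is infinite, and the normalization of $v$ is delegated to Lemmas 10 and 11 of \cite{GP} — but the way you incorporate $S$ is a genuine gap. You make \emph{every} vertex of an $S$-level non-branching, which forces $N_{i+1}=N_i$ across such a level, whereas realizing $v$ exactly requires $N_{i+1}=d(i)$, and nothing in the hypotheses prevents $d(i)\neq d(i-1)$ at a level of $S$: take, say, $d(n)=n+2$ (so $v(n)=O(n^2)$ and all three bullets hold) and any sparse $S$; at $i\in S$ you would need $i+2$ vertices at level $i+1$ but could only produce $i+1$. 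Your fallback — that deleting a density-zero set of levels preserves the growth type, plus a ``finite correction'' — delivers only the growth \emph{type}, not growth exactly $v$, which is what the lemma asserts and what the later volume computation uses. The paper avoids this by first singling out an infinite ray (the trunk, ordered first at every level) and imposing the $S$-constraint only on the \emph{single trunk vertex} of each $S$-level; this costs at most one child per level (the attachable maximum drops from $2N_i$ to $2N_i-1$) and is absorbed by the other vertices of that level, and the vanishing lower density of $S$ plays no role in this step at all — it is used only later, in Proposition~\ref{lnj}, to choose the $n_j$.

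Two related points. First, you never construct the trunk or arrange one-endedness; but $S$ is by definition a set of \emph{trunk} levels (where the pieces $Q_j$ are subsequently inserted), and the one-endedness of $T_{S,v}$, with all other branches finite, is exactly what the paper uses afterwards to see that $R_{S,v}$ is diffeomorphic to $M$. A tree with the correct level counts but no distinguished infinite ray through the $S$-levels is not usable downstream. Second, Lemmas 10 and 11 of \cite{GP}, to which you defer the ``main obstacle,'' concern only the normalization of the bgd-function; the exact, $S$-compatible realization is the separate greedy, level-by-level construction described after the lemma, and the levels where $d$ drops (which you single out as the difficulty) are in fact harmless — they simply produce leaves, which the construction explicitly permits. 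A last minor point: for admissibility the root's degree must itself be at most the uniform bound (the pieces glued to the tree have at most three boundary spheres), so the normalization must also arrange $v(1)-v(0)$ to be small; ``a fixed finite number'' is not quite enough.
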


 The process of constructing $T_{S,v}$ is as follows. The tree shall have only one end, which we call its trunk. First, choose a set $S=\{ (n_i,n_i+t_i)\}$ of vertices of the trunk which shall have only $1$ branch in our construction (i.e, these vertices shall be of degree $2$). $S$ should be chosen such that it has a vanishing lower density. The exact choice of the $S$ in our case shall be made later. We can add the additional constraint that $n_i+t_i+1\neq n_{i+1}$. Start with the root vertex. Since $v(2)=3$ by the conditions on $v$, attach $2$ branches to the root vertex, and index them. The vertex with the lowest index belongs to the trunk. Assuming we have attached the vertices up to level $i$, we now need to attach the vertices on level $i+1$. Assign an order to the $v(i)-v(i-1)$ vertices of level $i$, according to the order of the vertices on level $i-1$. That is, the vertices attached to a vertex of lower order on level $i-1$ are ordered before the vertices attached to a vertex of a higher order. This means that the vertices attached to the trunk always have the lowest order. If $i+1\in S$, attach a single vertex to the first vertex of level $i$ (the trunk); otherwise, attach $2$. For the next vertex of level $i$ in the order, attach $2$ vertices if possible without exceeding the $v(i+1)-v(i)$ vertices allowed. If not, attach $1$ vertex or none, according to the number of vertices left. Continue the process with each of the vertices on level $i$ according to their order until the $v(i+1)-v(i)$ vertices have been attached on level $i+1$. Proceeding in this manner, we construct $T_{S,v}$, which has growth exactly $v$ at the root. Note that by the construction, this tree has only a single end, which is the trunk, and all other branches terminate in finite time.

To finish the construction, attach the piece $Q_i$ collectively to the vertices on level $\{n_i,\dots,n_i+t_i\}\in S$ of the trunk. On the non-trunk vertices of $T_{S,v}$, we attach $m$-spheres. For the vertices on the trunk that do not lie in $S$, we attach one $m$-sphere, henceforth denoted as $\mathbb{S}^m$, for each component of $\partial^+Q_i$ of the $Q_i$ that precedes it. Thus, for every $i$, each component of $Q_i$ is attached to an $\mathbb{S}^m$ at each of its boundary components, and the number of $\mathbb{S}^m$ attached to a component of $Q_i$ equals the number of components of $Q_{i+1}$ that it is attached to. We attach the vertices to each other by their boundary spheres according to $T_{S,v}$, which is equivalent to taking connected sums along the edges of $T_{S,v}$. For the vertices that do not lie on the trunk but are attached to a trunk vertex via an edge, we attach one $\mathbb{S}^m$ to any one of the components of the piece representing the trunk vertex. We call the resulting manifold $R_{S,v}$. By this construction, we have attached $\mathbb{S}^m$ to the $Q_i$ along $T_{S,v}$, and each $\mathbb{S}^m$ is joined to either $1$, $2,$ or $3$ other components. Thus, each $\mathbb{S}^m$ has either $1$, $2$, or $3$ boundary components. Denote them as $S_1$, $S_2$, and $S_3$ respectively. Note that the $\mathbb{S}^m$ which are joined on the trunk are all of type $S_3$ or $S_2$, and hence the $Q_i$ are joined only to pieces of type $S_3$ or $S_2$. Again, for each sphere $S_k$ lying on level $i$ of $T$, $\partial^-{S_k}$ denotes that boundary component which joins it to a piece lying on level $i-1$, and $\partial^+ S_i$ denotes those boundary component(s), if any, which join it to piece(s) on level $i+1$. 

The admissible tree we constructed has only one end, and all other branches are finite. Hence, $R_{S,v}$ is diffeomorphic to $M$. To see this, cut the tree at the edge $\{n_j+t_j,n_j+t_j+1\}$. This results in a finite subtree, and the corresponding manifold is diffeomorphic to the submanifold of $M$ until level $j$ of $T$ (since we have only added a connected sum of finitely many spheres). Thus, we get exhaustions of $M$ and of $R_{S,v}$ such that the submanifolds are diffeomorphic at each level of the exhaustions, and the claim follows.

\section{A key Proposition }
In this section, we prove a proposition based on Proposition $13$ from \cite{GP}, that is crucially used to prove the main theorem. For a piece $P$, let $t_P$ and $T_P$ respectively denote the minimum and maximum of the distance function to $\partial^-P$, restricted to $\partial^+P$, across all components. For $k\leq T_P$, let $U_{P,k}$ denote the $k$-tubular neighbourhood of $\partial^-P$, and $v_P(k)=vol(U_{P,k})$, $v_P'(k)=v_P(k)-v_P(k-1)$. Here, a $k-$tubular neighbourhood of any set $\mathcal{A}$ in a piece $P$ refers to all points of $P$ that lie within distance $k$ of $\mathcal{A}$. When $P$ has multiple components $\{P_i\}$, note that the definition implies that $v_P(k)=\Sigma v_{P_i}(k)$. That is, we consider the volume of the tubular neighbourhoods across all the components of $P$.
\begin{prop}\label{p13}
    Let $\{Q_j$\} be a sequence of possibly disconnected compact manifolds with boundary, where $Q_j$ is the disjoint union of the pieces attached to the vertices on level $j$. Assume that
    \begin{itemize}
        \item $\partial Q_j$ is split into two collections of boundary spheres $\partial^-Q_j$ and $\partial^+ Q_j$;
        \item $\partial^- Q_{j+1}$ is diffeomorphic to $\partial^+ Q_j.$
    \end{itemize}
    Then there exist integers $l,\ h,\ H,\ d$, sequences of integers $t_j,\ U_j,$ and Riemannian metrics on pieces $Q_j,\ S_1,\ S_2,\ S_3$ such that
    \begin{enumerate}
        \item For all components $P_i$ of all pieces $P$, the maximal distance of a point of $P_i$ to $\partial^- P_i$ in that component is achieved on $\partial^+ P_i$. In other words, the maximum of those distances across the components of $P$ is equal to $T_P$.
        \item $\frac{1}{3}lt_j\leq t_{Q_j} \leq T_{Q_j}\leq lt_j$.
        \item For all other pieces $P$, $\frac{1}{3}l\leq t_{P} \leq T_{P}\leq l$.
        \item diameter$(\partial^- Q_{ji})\leq d$ on each component of $Q_{ji}$ of $Q_j$.
        \item All components $P_i$ of all pieces $P$ carry a marked point $y_{P_i}\in \partial^- P_i$. When a connected component $P_i'$ of $P'$ is glued on top of a component $P_j$ of $P$, $d(y_{P_j},y_{P_{i}'})\leq l$ (resp. $lt_j$ if $P=Q_j$).
        \item For all pieces $P$ that are not of type $Q_j$, $h\leq \textrm{min}v'_{P}\leq \textrm{max}v'_{P}\leq H$.
        \item \textrm{max} $v'_{Q_j}\leq U_j$.
        \item If $\partial^+Q_j$ and $\partial^-Q_j$ are diffeomorphic, then they are isometric, by an isometry that maps $y_j$ to $y_{j+1}$. $\partial^+ Q_j$ and $\partial^- Q_{j+1}$ are isometric for all $j$.
        \item All pieces have uniformly positive scalar curvature, bounded geometry, and product metric $g_{S_\delta}+dt^2$ near the boundary, where $g_{S_\delta}$ is the standard metric on a sphere of radius $\delta$.
         
    \end{enumerate}
    $t_j,\ d$ are respectively called the height and diameter parameters.
\end{prop}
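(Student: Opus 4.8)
The plan is to assemble every piece out of two kinds of building block: a \emph{core}, obtained by running the Gromov--Lawson construction on a fixed positive-scalar-curvature metric of the underlying closed manifold, and \emph{cylindrical necks} isometric to $(S^{m-1}_\delta\times[0,\ell],\;g_{S_\delta}+dt^2)$ glued onto the boundary spheres produced by the connected sums. Each vertex of $T$ carries an element of $\mathcal U\subset\mathcal M_m$, hence a metric of positive scalar curvature, and each sphere piece $S_k$ with $k\in\{1,2,3\}$ is $\mathbb S^m$ with $k$ discs removed, carrying the round metric; only finitely many diffeomorphism types occur among all these, and when $\mathcal U$ is finite, only finitely many metrics. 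Running the Gromov--Lawson surgery at every removed disc with one fixed, sufficiently small neck radius $\delta$ produces on each piece a positive scalar curvature metric that equals the product $g_{S_\delta}+dt^2$ on a collar of every boundary component, so the cores have uniformly bounded geometry, bounded diameter, and bounded volume. A neck $g_{S_\delta}+dt^2$ has constant scalar curvature $(m-1)(m-2)/\delta^2>0$ and bounded geometry with bounds independent of $\ell$, and it matches the product collars exactly, so attaching necks yields smooth metrics that remain positively curved, of bounded geometry, and product near the boundary --- this is condition (9).

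I would then use the neck lengths as tuning parameters. Let $\rho$ bound the diameter of every core, set $d:=\lceil\pi\delta\rceil+1$ so that every boundary sphere has diameter at most $d$ (which is (4), since each component of $\partial^-Q_{ji}$ is such a sphere), and fix an integer $l\ge 3(\rho+d+2)$. To each piece attach a unit collar on $\partial^-$ and a neck on every component of $\partial^+$, all of one common length: a length in $[\,l/2,\,l-\rho-2\,]$ for the sphere pieces, and a length $\ell_j\ge l/2$ for the components of $Q_j$. Because the distance to the base of a product neck is monotone in $t$ and the core contributes at most $\rho$, the farthest point of a component from $\partial^-$ is the far end of one of its $\partial^+$ necks, which gives (1) (vacuously for leaf pieces, where $\partial^+=\emptyset$); moreover $t_P,T_P\in[l/3,l]$ for the sphere pieces, which is (3), while $t_{Q_j},T_{Q_j}\in[\ell_j,\,\ell_j+\rho+2]$, so putting $t_j:=\lceil(\ell_j+\rho+2)/l\rceil$ gives $T_{Q_j}\le lt_j$ and, by the lower bound on $\ell_j$, also $t_{Q_j}\ge\frac13 lt_j$, which is (2). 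The point that one may take $\ell_j$ arbitrarily large is precisely what will later allow the $Q_j$ to be stretched to match a prescribed growth function. Condition (5) holds with $y_{P_i}$ any point of $\partial^-P_i$: when $P_i'$ is glued on top of $P$, the point $y_{P_i'}$ lies on a component of $\partial^+P$, within distance $T_P+d$ of $y_P$, and $T_P\le l$ (resp.\ $lt_j$ if $P=Q_j$).

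The volume and isometry conditions are then soft. A unit-width shell $S^{m-1}_\delta\times[k-1,k]$ inside a neck has volume exactly that of $S^{m-1}_\delta$, and along the core of a piece the shell volumes lie between a fixed positive constant (from the collars) and the volume of that core; over the finitely many sphere types this gives uniform $h\le\min v'_P\le\max v'_P\le H$, which is (6), and for each $j$ one simply sets $U_j:=\max_k v'_{Q_j}(k)$, which is finite since $Q_j$ is compact (it grows only with the number of components of $Q_j$) and is (7). Finally, $\partial^+Q_j$ and $\partial^-Q_{j+1}$ are by hypothesis diffeomorphic, hence unions of equally many round $(m-1)$-spheres of radius $\delta$, hence isometric; since the isometry group of a round sphere acts transitively, the isometry can be chosen to match marked point with marked point, and the same holds for $\partial^-Q_j$ versus $\partial^+Q_j$ when these are diffeomorphic --- this is (8).

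The \emph{main obstacle} is the metric bookkeeping behind items (1) and (2): one must be sure that the Gromov--Lawson cores are ``small'' in a scale-invariant sense --- bounded diameter and volume, and with their farthest point from $\partial^-$ pushed out onto $\partial^+$ by the necks --- uniformly over all pieces that can occur, and that all neck lengths can be chosen commensurate with a single integer scale $l$ while still realizing an arbitrary sequence $t_j$. When $\mathcal U$ is finite this uniformity is automatic; the only delicate point is that a piece with several $\partial^+$ components (up to three for the spheres, and unbounded for the $Q_j$) still satisfies (1), which is the reason all $\partial^+$ necks on a given piece are given the same length. Everything else follows from the product structure near the boundary.
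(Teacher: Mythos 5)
Your construction is essentially the paper's: Gromov--Lawson surgery with one fixed neck radius $\delta$ produces cores with product collars $g_{S_\delta}+dt^2$, product necks of adjustable length push the farthest point of each component out to $\partial^+$ and calibrate $t_P,T_P$ against the integer scale $l$, and the volume bounds $h,H,U_j$ and the boundary isometries then come for free from the product structure. The one step that fails as written is the uniform bound $\rho$ on the diameter of \emph{every} core. For the sphere pieces $S_1,S_2,S_3$ such a bound is automatic, but a component $Q_{ji}$ sitting on a vertex of degree $k$ must first be rescaled so that $k$ pairwise disjoint balls of radius $2\delta$ fit inside it (and so that $|K|\le 1$ and the injectivity radius is bounded below), and moreover $\mathcal U$ may be infinite; hence the core diameters of the $Q_j$ are not uniformly bounded, no single $\rho$ exists, and both your choice $l\ge 3(\rho+d+2)$ and your estimate $T_{Q_j}\le \ell_j+\rho+2$ break down. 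The repair is exactly the paper's device and is already latent in your setup: let $\rho_j$ denote the (finite) diameter bound for the level-$j$ cores, require $\ell_j\ge \rho_j+l$ rather than $\ell_j\ge l/2$, and define $t_j$ from $\ell_j+\rho_j$; condition (2) only demands $T_{Q_j}\le lt_j$ with $t_j$ depending on $j$, so no uniformity across levels is needed, and only the sphere cores require a $j$-independent $\rho$.

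Two smaller constant-chasing slips, neither affecting the architecture: with $t_j=\lceil(\ell_j+\rho+2)/l\rceil$ one only gets $lt_j\le \ell_j+\rho+2+l$, so $t_{Q_j}\ge\frac13 lt_j$ forces $\ell_j\ge (l+\rho)/2+1$, slightly more than your stated $\ell_j\ge l/2$; and your own estimate $d(y_{P},y_{P_i'})\le T_P+d$ combined with $T_P\le l-1$ gives only $l+d-1$, not the required $l$, so the $\partial^+$ neck lengths should be capped at $l-\rho-d-2$ (the paper's own proof is equally loose at this point). With these adjustments the argument matches the paper's proof step for step.
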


\subsection{Metric on $S_1$, $S_2$, and $S_3$}
We denote by $g_{S_r}$ the standard metric on a $\mathbb{S}^m$ of radius $r$.
We start with the standard metric $g_{S_1}$ on the $\mathbb{S}^m$. Choosing a $\delta$ sufficiently small, remove $1$, $2$, or $3$ discs of radius $2\delta$ from $S_1$, $S_2$, or $S_3$ respectively. We apply the construction from \cite{GL}, resulting in a metric of positive scalar curvature on $P$ that agrees with $g_{S_1}$ outside small tubular neighbourhoods of the removed discs, and has the product metric $dt^2 + g_{S_\delta}$ near the boundary spheres. The tubular neighbourhoods of the boundary spheres are again chosen sufficiently small so that they are all pairwise disjoint. This gives the initial metrics of uniformly positive scalar curvature on the pieces $S_1$, $S_2$, and $S_3$. Denote the maximum of the absolute value of the sectional curvature on these metrics as $K_{max}$, and denote the minimum value of the injectivity radius as $i_{min}$. Note that $i_{min}\geq\pi\delta$.

For each piece $P$ of the type $S_1$, $S_2$, or $S_3$, $\partial^- P$ consists of one sphere, and $\partial^+ P$ is empty or consists of one or two spheres respectively. The metric near each boundary sphere is of the form $dt^2 + g_{S_\delta}$. We attach a cylinder $[0,T]\times \mathbb{S}^{m-1}$ to each of the boundary spheres of $P$ with the same product metric, choosing a suitable $T>0$. This does not change the diffeomorphism type of the piece $P$ and retains the positive scalar curvature property. Making $T$ large enough ensures that the maximum distance to any point on $\partial^- P$ is achieved on $\partial^+ P$, if it is non-empty, for each of $S_1$, $S_2$, or $S_3$, satisfying condition $1$ of Proposition \ref{p13}. Note that this does not change the bounds for the injectivity radius or the sectional curvature of the pieces.

Recall that $T_P$ is the maximum of the distances of any point in $\partial^+ P$ from $\partial^- P$, and $t_P$ is the minimum of those distances. Since any geodesic joining a point of $\partial^-P$ with a point of $\partial^+ P$ starts at one boundary sphere and ends at another, it must necessarily cross the cylinders attached to those two boundary spheres. Thus, $t_P\geq 2T$. On the other hand, the diameter of one such cylinder is bounded above by the sum of the diameter of the boundary sphere and the length $T$. Since the diameter of the boundary sphere cannot be greater than the diameter of $\mathbb{S}^m$ with the metric $g_{S_\delta}$, which we denote by $d_S$, the diameter of the cylinder is at most $d_S+T$. Thus, $T_P\leq (d_S+T)+d_S+(d_S+T)$. Choosing $T\geq \pi$ ensures that $T_P\leq 5T$. Therefore, we get $2T\leq t_P\leq T_P\leq 5T$. Define $l=6T$. Then, $$\frac{l}{3}\leq t_P\leq T_P \leq l,$$ thus satisfying condition $3$ of Proposition \ref{p13}. We choose $T$ large enough to ensure both conditions $1$ and $2$.

Set $h = \min\{v_P'\}$ and $H=\max\{v_P'\}$ amongst all $P$ of type $S_1$, $S_2$, or $S_3$. On each boundary sphere $\partial^- P$, mark an arbitrary point $y_P$. Since $T_P\leq l$, when we glue a component $P'$ on top of $P$, the gluing ensures that $d(y_P,y_{P'})\leq l$. Then the metrics defined on $S_1$, $S_2$, and $S_3$ have positive scalar curvature, bounded geometry, and satisfy the requirements stated in the proposition.

\subsection{Metric on the $Q_j$}

The piece $Q_j$ is the disjoint union of all the components lying on level $j$ of $T$. We define metrics on each component of $Q_j$ separately. Let $Q_{ji}$ be a component of $Q_j$, i.e., let $Q_{ji}$ be represented by a vertex of $T$ on level $j$. The degree of that vertex equals the number of boundary spheres on $Q_{ji}$. If the vertex has degree $k$, $\partial Q_{ji}$ consists of $k$ spheres, with $\partial^- Q$ being a single sphere and $\partial ^+ Q$ comprising of $k-1$ spheres. We first consider the closed manifold $Q_{ji}'\in \mathcal{M}_m$ represented by the vertex. There exists a metric of positive scalar curvature on $Q_{ji}'$. Choose $k$ points on $Q_{ij}'$, and choose a metric of positive scalar curvature on $Q_{ij}'$ such that the geodesic balls of radius $2\delta$ centered at those $k$ points are pairwise disjoint. This can be done by choosing any metric of positive scalar curvature and then scaling it as required. By further scaling, if required, ensure that the absolute value of the sectional curvature on $Q_{ij}'$ is bounded above by $1$, the minimum value of the injectivity radius is bounded below by $i_{min}>2\delta$. We denote this metric by $g'$. 

To get a metric on $Q_{ji}$, we start with the metric $g'$, remove the chosen points, and deform the metric in a geodesic ball of radius $2\delta$ about the removed points following the construction in \cite{GL}. Recall that for the connected sum, we remove $k$ discs from $Q_{ji}'$, and so removing the $k$ points from $Q_{ji}'$ results in a manifold diffeomorphic to $Q_{ji}$. We again get a metric of positive scalar curvature on $Q_{ji}$ such that the metric agrees with $g'$ outside small tubular neighbourhoods of the points which were removed, and on the spherical boundaries of $Q_{ji}$ it is a product metric of the form $dt^2 + g_{S_\delta}$. Furthermore, the tubular neighbourhoods of the boundary spheres on $Q_{ji}$ can be chosen such that they are all pairwise disjoint. Since the metric on the pieces of type $S_3$ defined earlier also has the product metric $dt^2 + g_{S_\delta}$ near the boundary, the boundary spheres of $Q_{ji}$ can be joined to those of $S_3$ via an isometry. We take this metric on $Q_{ji}$ and denote it by $g$. This metric has the same bounds on the sectional curvature and injectivity radius as the $S_i$ by the computation in Lemma $8.3$ of \cite{MW}, since the modification of the metric is identical to that in the case of the $S_i$ with the same initial curvature bounds. Thus, $g$ is a metric of bounded geometry on $Q_j$.

Denote one of the boundary spheres of $Q_{ji}$ as $\partial^- Q_{ji}$, and the rest as $\partial^+ Q_{ji}$. Note that this choice of boundary sphere does not change the diffeomorphism type of the resulting manifold, since this results in a change in the configuration of the discs removed for the connected sum operation, and a connected sum is independent of this choice up to diffeomorphism. To ensure that the maximum distance of any point of $Q_{ji}$ to a point of $\partial^-Q_{ji}$ is reached on $\partial^+ Q_{ji}$, we attach cylinders to the boundary spheres, which also does not change the diffeomorphism type of $Q_{ji}$. By the construction, in a neighbourhood of the boundary, the metric is a product metric of the form $dt^2 + g_{S_\delta}$. We attach a cylinder $[0,T']\times \mathbb{S}^{m-1}$ to the boundary spheres equipped with the same product metric. Choosing $T'$ large enough as before ensures that the maximum distance to a point in $\partial^- Q_{ji}$ is obtained only on $\partial^+ Q_{ji}$, and is equal to $T_{Q_{ji}}$. We denote again by $g$ the resulting metric on the piece $Q_{ji}$ along with the cylinders. This ensures that property $1$ of Proposition \ref{p13} is satisfied on the piece $Q_{ji}$. The metric on the cylinder agrees with the metric on a neighbourhood of the original boundary spheres, and hence $Q_{ji}$ still has positive scalar curvature and product metric near the boundary.

Let $d_{Q_{ji}'}$ denote the diameter of $Q_{ji}'$ with metric $g'$. $T_{Q_{ji}}$ is the maximum of the distances of any point in $\partial^+ Q_{ji}$ from $\partial^- Q_{ji}$, and $t_{Q_{ji}}$ is the minimum of the distances of a point of $\partial^+Q_{ji}$ from $\partial^-Q_{ji}$. Again, a geodesic joining a point of $\partial^-Q_{ji}$ with a point of $\partial^+ Q_{ji}$ starts at one boundary sphere and ends at another, and must thus cross the cylinders attached to those two boundary spheres. Therefore, $t_{Q_{ji}}\geq 2T'$. The diameter of such an attached cylinder is bounded above by the sum of its length $T'$ and the diameter of the boundary sphere. Since the diameter of the boundary sphere cannot be greater than the diameter of $Q_{ji}'$, which is $d_{Q_{ji}'}$, the diameter of the cylinder is at most $d_{Q_{ji}'}+T'$. Hence, $T_{Q_{ji}}\leq (d_{Q_{ji}'}+T')+d_{Q_{ji}'}+(d_{Q_{ji}'}+T')$. Choosing $T'\geq d_{Q_{ji}'}$ gives us $T_{Q_{ji}}\leq 5T'$. As a result, we again get the inequality $2T'\leq t_{Q_{ji}}\leq T_{Q_{ji}}\leq 5T'$. 

$Q_j$ is a finite disjoint union of components $Q_{ji}$ with metrics of uniformly positive scalar curvature as defined above. Consider the largest value of the length $T'$ of the attached cylinders across those components, and denote it as $T_j$. Modifying the metrics so that each component now has cylinders of length $T_j$ ensures that the inequality $2T_j\leq t_{Q_{ji}}\leq T_{Q_{ji}}\leq 5T_j$ holds for all components of $Q_j$ for the same value $T_j$. Note that increasing the length of the cylinder does not affect property $1$ of Proposition \ref{p13} or the positive scalar curvature condition. The union of all the components now gives the final metric on $Q_j$, as desired. Define $t_j=\frac{6T_j}{l}$. Then, we again have $$\frac{lt_j}{3}\leq t_{Q_j}\leq T_{Q_j}\leq lt_j$$ which establishes property $2$ of Proposition \ref{p13}.

On each component $Q_{ji}$ of $Q_j$, $\partial^- Q_{ji}$ is a sphere with metric $g_{S_\delta}$. Define $d$ as the diameter of this sphere. This ensures property $4$ of Proposition \ref{p13} holds as well. Defining $U_j$ as $\max \{v'_{Q_j}\}$ allows us to establish property $7$. Note that $\partial^+ Q_j$ and $\partial^- Q_{j+1}$ are isometric by construction. On each component $\partial^-Q_{ji}$ of $\partial^- Q_j$, mark an arbitrary point $y_{ji}$. When $\partial^+ Q_j$ and $\partial^- Q_j$ are diffeomorphic, they have the same number of boundary spheres. Then they are isometric by construction, and we can ensure that the isometry maps one marked point to another. Also, since $T_{Q_j}\leq lt_j$, whenever a piece $P$ is glued on a component $Q_{ji}$ of $Q_j$, we must necessarily have $d(y_{Q_{ji}},y_P)\leq lt_j$.

This ensures that all the properties mentioned in Proposition \ref{p13} are satisfied for the metrics defined on all the pieces, and hence finishes the proof of the proposition.

\qed

\section{Proof of Theorem \ref{main}}

 Recall that we took a connected sum along a tree $T_{S,v}$ constructed earlier in order to get a manifold $R_{S,v}$ diffeomorphic to $M$. We can define a function $r:R_{S,v}\to \mathbb{N}$ where $r$ is defined as the following. If $P = Q_j$ and $x\in P$, $r(x)=\lfloor d(x,\partial^-Q_j)\rfloor+n_jl$. If $P$ is any other type of piece, attached at a vertex of $T_{S,v}$ of level $n$, and $x \in P$, $r(x) = \lfloor d(x, \partial^-P )\rfloor + nl$. Then the discrete growth function $z$ is defined, for $n \in \mathbb{N}$, as \begin{equation}\label{e1}
    z(n)=\text{vol}\{x\in R_{S,v} |r(x)\leq n\}.
\end{equation}

The following proposition lets us choose $S$ as per our requirements. We refer to Proposition $17$ of \cite{GP} for the proof (this corresponds to the case in Proposition $17$ where $u_j$ is independent of $j$).

\begin{prop}\label{lnj}
    Let $v: \mathbb{N} \to \mathbb{N}$ be a bgd-function that satisfies the assumptions stated in Lemma \ref{lw}.
    Let $t_j$ and $d$ be the parameters of the pieces $Q_j$, as provided by the Proposition \ref{p13}. 
    Then there exists an increasing sequence $n_j$ such that
    \begin{enumerate}
        \item $n_j \geq d$;
        \item the subset $S = \bigcup_j [n_j , n_j + t_j - 1]$ has vanishing lower density;
        \item the discrete growth function $z$ of the corresponding Riemannian manifold $R_{{S,v}}$ has the same growth type as $v$.
    \end{enumerate}
\end{prop}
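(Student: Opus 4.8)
I would prove Proposition~\ref{lnj} by choosing the insertion points $n_j$ of the pieces $Q_j$ along the trunk so sparsely that the $Q_j$ contribute only a bounded multiple of the growth already produced by the sphere pieces $S_1,S_2,S_3$; then $z$ is comparable to the function $n\mapsto v(n/l)$, which has the same growth type as $v$. The conclusions $n_j\ge d$ and the vanishing lower density of $S$ will then be arranged along the way.

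First I would record the volume bounds coming from Proposition~\ref{p13}. Each sphere piece $P$ satisfies $\tfrac{1}{3}l\le t_P\le T_P\le l$ and $h\le v_P'\le H$, so its total volume $v_P(T_P)$ lies in the fixed interval $[h,(l+1)H]$; each $Q_j$ has a finite total volume $V_j\le (lt_j+1)U_j$, and its metric — hence $t_j$ and $V_j$ — is produced before, and independently of, the choice of $n_j$. Since $r$ rescales distance by the factor $l$ and uses exactly the offset $n_jl$ on $Q_j$, a piece attached at level $\kappa$ of $T_{S,v}$ occupies the $r$-band $[\kappa l,\kappa l+T_P]\subseteq[\kappa l,(\kappa+1)l]$; hence only pieces at levels $\le n/l$ enter $z(n)$, and every piece at level $\le n/l-1$ enters with its full volume. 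As $T_{S,v}$ has exactly $v(K)$ vertices up to level $K$ and at most $|S\cap[0,K]|\le K+1$ of them carry (parts of) the $Q_j$, the number of sphere vertices up to level $K$ is $v(K)-|S\cap[0,K]|$, which is $\gtrsim v(K)$ for large $K$ because $v(K)\ge 2K-O(1)$ (iterating the lower bound $v(n+2)-v(n+1)\ge 2$ of Lemma~\ref{lw}). This gives, for all large $n$,
\begin{equation*}
\tfrac{h}{4}\,v\!\left(\tfrac{n}{l}-1\right)\ \le\ \big(\text{sphere part of }z(n)\big)\ \le\ (l+1)H\,v\!\left(\tfrac{n}{l}\right).
\end{equation*}

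Next I would fix $n_j$ recursively. Given $n_1<\dots<n_{j-1}$ and the (already constructed) pieces $Q_1,\dots,Q_j$, so that $t_j$ and $V_j$ are known, I would take $n_j$ larger than each of finitely many thresholds: $n_j\ge d$; $n_j\ge 2^{j}(n_{j-1}+t_{j-1})$, which makes the intervals of $S$ increasing, disjoint and non-adjacent; $v(n_j)\ge V_j$; and $v(n_j)\ge 2v(n_{j-1})$. Each holds once $n_j$ is large enough, since $v\to\infty$. Set $S=\bigcup_j[n_j,n_j+t_j-1]$. Then $S$ has vanishing lower density: along the subsequence $k=n_{j+1}-1$ one has $|S\cap[0,k]|=\sum_{i\le j}t_i\le 2^{-(j+1)}n_{j+1}$, which is $o(k)$. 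This gives conclusions $(1)$ and $(2)$. With this $S$, the admissible tree $T_{S,v}$ exists by Lemma~\ref{lw}, and $R_{S,v}$ with its metric is assembled as above.

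Finally I would bound the $Q_j$-part of $z(n)$: since $Q_j$ lives at $r$-values $\ge n_jl$, it enters $z(n)$ only when $n_j\le n/l$, and then by at most $V_j\le v(n_j)$; by $v(n_j)\ge 2v(n_{j-1})$ the sum over the relevant $j$ telescopes to at most $2v(n/l)$. Combined with the display, $\tfrac{h}{4}v(n/l-1)\le z(n)\le\big((l+1)H+2\big)v(n/l)$ for large $n$, the finitely many small $n$ absorbed into the additive constant of the growth-type relation. Since $n\mapsto\lceil n/l\rceil$ is a fixed linear reparametrisation and $v$ is non-decreasing, $v(\lceil\cdot/l\rceil)$ has the same growth type as $v$, and hence so does $z$, which is conclusion $(3)$. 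The step requiring the most care is the uniform-in-$n$ upper bound $z(n)=O(v(n/l))$: when $n$ lands inside a long $Q_j$-band, the piece $Q_j$ deposits its entire volume $V_j$ over a short range of $r$-values, so one must know that the volume $\asymp v(n_j)\asymp v(n/l)$ already accumulated from the spheres at smaller levels dominates $V_1+\dots+V_j$; this is exactly what the recursive, super-exponentially spaced choice of the $n_j$ (with $v(n_j)\ge V_j$ and $v(n_j)\ge 2v(n_{j-1})$) guarantees. The lower bound is robust, since the sphere pieces alone already force $z(n)\gtrsim v(n/l)$ independently of the $Q_j$. The detailed bookkeeping is that of Proposition~$17$ of \cite{GP}.
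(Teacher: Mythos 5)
Your argument is correct and is essentially the strategy of Proposition~17 of \cite{GP}, to which the paper defers for this statement: one chooses the $n_j$ recursively and so sparsely that the intervals $[n_j,n_j+t_j-1]$ have vanishing lower density while the total volume deposited by the $Q_j$'s up to $r$-level $n$ is dominated (via $v(n_j)\ge V_j$ and the doubling condition $v(n_j)\ge 2v(n_{j-1})$) by the sphere-tree contribution, which is itself pinched between constant multiples of $v(n/l-1)$ and $v(n/l)$. Your bookkeeping of the $r$-bands, the count of sphere vertices via $v(K)\ge 2K-O(1)$, and the reduction from $v(\lceil\cdot/l\rceil)$ back to $v$ all check out against the definitions in the paper.
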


Using this proposition, we make a choice $S$ of vertices of the trunk, which determines a tree $T_{S,v}$ by the construction described after Lemma \ref{lw}. That in turn determines a manifold $R_{S,v}$ by taking the connected sums along $T_{S,v}$. Using Proposition \ref{p13} we get a Riemannian metric on each piece such that if a piece $P$ is attached on a piece $P'$, the boundary components on which they are attached are isometric and have product metric in some neighbourhood. We attach the pieces in this manner to define a Riemannian metric on $R_{S,v}$ such that the discrete growth function $z$ on $R_{S,v}$ lies in the same growth class as $v$. As mentioned earlier, $R_{S,v}$ is diffeomorphic to $M$ by construction. We need to show that $R_{S,v}$ with the constructed metric has a volume growth function of the same growth type as $v$.

To prove Theorem \ref{main}, it is enough to show that the volume growth function of $R_{S,v}$ lies in the same growth class as $z$, since by the previous proposition $z$ and $v$ lie in the same growth class. Choose a basepoint $o$ on $R_{S,v}$ such that $o$ lies on the piece attached to the root vertex of the tree $T_{S,v}$. Let $w(n)=$vol$(B(o,n))$ in $R_{S,v}$ with the constructed Riemannian metric. We need to show that $w$ lies in the growth class of $z$. Recall the function $r:R_{S,v}\to \mathbb{N}$ defined earlier. 

Consider a point $x$ in $R_{S,v}$ that lies on a piece $P$ on level $\alpha$. Choose a distance minimising geodesic between $o$ and $x$. Such a geodesic passes through some pieces $P_i$, where $P_i$ lies on the $i^{th}$ level of $T_{S,v}$. If the geodesic passes through some piece of the form $Q_j$, i.e if $P_i=Q_j$ for some $i$, then by construction $P_k=Q_j$ for all $n_j\leq k<n_j+t_j$. If $P_i$ consists of multiple components then the geodesic passes through exactly one component of $P_i$ by virtue of it being length minimising. Let us consider the points $y_{P_i}$ which are the marked points in $\partial^- P_i$ lying in those components of $P_i$ through which the geodesic passes. In case $P_i=Q_j$, we get only one marked point for all the $P_i$ where $n_j\leq i < n_j+t_j$. Call those selected points $y_0=o,y_1,y_2,\dots ,y_k\in \partial^- P$. Let $y_{k+1}$ be the point in $\partial^- P$ which is the closest to $o$. Consider the path generated by connecting each $y_i$ to $y_{i+1}$ via length minimising geodesics, such that $y_k$ and $y_{k+1}$ are connected via a geodesic lying entirely in $\partial^-P$. Since the distance between $o$ and $x$ is less than the length of this path, we have the following:
$$d(o,x)\leq \Sigma_{i=0}^k d(y_i,y_{i+1})+d(y_{k+1},x).$$

Based on the inequalities in condition $5$ of Proposition \ref{p13}, we know $d(y_i,y_{i+1})\leq l$ unless $y_i$ is a piece of type $Q_j$ for some $j$, in which case $d(y_i,y_{i+1})\leq lt_j$.
Hence if the piece $P$ is of the type $Q_\beta$ for some $\beta$, then $\Sigma_{i=0}^{k-1} d(y_i,y_{i+1})\leq 2n_\beta l$. Since $y_{k+1}$ lies on $\partial^- Q_\beta$, $d(y_k,y_{k+1})\leq $diameter$(\partial^- Q_\beta) = d \leq n_\beta l$. Therefore by summation, we get, 
$\Sigma_{i=0}^k d(y_i,y_{i+1})\leq 3n_\beta l$. Thus,
$$d(o,x)\leq 3n_\beta l + (r(x)-n_\beta l)\leq 3r(x).$$
If $P$ is instead of type $S_1,\  S_2$, or $S_3$, we similarly have $\Sigma_{i=0}^{k-1} d(y_i,y_{i+1})\leq 2\alpha l$, and $d(y_k,y_{k+1})\leq l$, and so 
$$d(o,x)\leq (2\alpha +1)l+(r(x)-\alpha l)\leq \frac{\alpha +1}{\alpha}r(x)\leq 3r(x).$$

Conversely, we again consider a distance minimizing geodesic $\gamma$ between $o$ and $x$, and let it pass through $\alpha$ vertices. Let $\{s_1, s_2,\dots,s_k\}$ be the values of $s$ such that $\gamma(s)$ lies on $\partial^- P'$ for some piece $P'$. Take $s_0=o$. Then $d(\gamma(s_i),\gamma(s_{i+1}))\geq \frac{l}{3}$, or  $d(\gamma(s_i),\gamma(s_{i+1}))\geq \frac{lt_j}{3}$ if the piece is of the type $Q_j$ for some $j$. And we also have $d(\gamma(s_k),x)\geq d(\partial^-(P),x)$. Therefore if $P$ is of the form $Q_\beta$ for some $\beta$,
$d(o,x)\geq \frac{1}{3}n_\beta l+d(\partial^-(P),x)\geq\frac{1}{3}n_\beta l+(r(x)-n_\beta l)\geq \frac{1}{2}r(x)$,
and in other cases, $d(o,x)\geq \frac{1}{3}\alpha l+d(\partial^-(P),x)\geq\frac{1}{3}\alpha l+(r(x)-\alpha l)\geq \frac{1}{3}r(x)$. 

Hence, combining the two directions, 
$$\{x\in R| r(x)\leq \frac{\alpha}{3}\}\leq \text{vol} \{B(o,\alpha)\}\leq \{x\in R| r(x)\leq 3\alpha\}.$$

Thus, the volume of $R_{S,v}$ is of the same growth type as the function $r$, which has the same growth type as $z$ as required, and so $R_{S,v}$ is a Riemannian manifold diffeomorphic to $M$ with volume growth in the same growth class as $v$. 

If $\mathcal{U}\subset\mathcal{M}_n$ is finite, there are finitely many compact pieces in the construction, and we have defined metrics of positive scalar curvature on each of those pieces. Hence, the infimum of the scalar curvatures on these pieces is bounded away from $0$, and hence we get a metric of uniformly positive scalar curvature. This completes the proof.\qed

A version of Gromov's construction for metrics of positive scalar curvature on connected sums also exists for positive isotropic curvature due to \cite{ML}. Hence, in the case of an open manifold $M$ of dimension $n\geq 2$ which is an infinite connected sum of closed manifolds that admit metrics of positive isotropic curvature, the same proof as above allows us to construct a metric on $M$ such that the volume function lies in the growth class of a desired bgd-function.

For dimension $3$, an orientable and complete manifold $M$ that admits a metric of uniformly positive scalar curvature is homeomorphic to an infinite connected sum of spherical manifolds and $\mathbb{S}^2\times \mathbb{S}^1$\cite{JW}. This gives us the following corollary:

\begin{cor}
    Given a $3$ dimensional orientable and complete manifold $M$ that admits a metric of uniformly positive scalar curvature and a bgd-function $v$, there exists a Riemannian manifold $(M',g)$ where $M'$ is homeomorphic to $M$ and has volume growth function in the same growth class as $v$.
\end{cor}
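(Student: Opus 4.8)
The plan is to deduce the corollary from Theorem \ref{main} in exactly the same way as the (unnumbered) corollary following it, but invoking the classification of \cite{JW} in place of that of \cite{BBM0}. We may first assume that $M$ is non-compact, since a bgd-function is strictly increasing and unbounded and hence can never be the volume growth function of a compact manifold, so in the compact case there is nothing to do.

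The first step is to apply the structure theorem of \cite{JW}: the orientable, complete $3$-manifold $M$ with uniformly positive scalar curvature is homeomorphic to an infinite connected sum, taken along a locally finite graph, of spherical space forms $\mathbb{S}^3/\Gamma$ and copies of $\mathbb{S}^2\times\mathbb{S}^1$. Each $\mathbb{S}^3/\Gamma$ carries the metric induced from the round sphere, and $\mathbb{S}^2\times\mathbb{S}^1$ carries the product of a small round $2$-sphere with a circle; both metrics have positive scalar curvature, so every factor is an orientable closed $3$-manifold lying in $\mathcal{M}_3$. Next, exactly as in the corollary after Theorem \ref{main}, I would use Theorem $2.3$ of \cite{BBM} to replace the underlying connected-sum graph by a tree, at the cost of inserting one $\mathbb{S}^1\times\mathbb{S}^2$ summand for each independent cycle. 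Since $\mathbb{S}^1\times\mathbb{S}^2\in\mathcal{M}_3$ and the original decomposition is locally finite, this produces a locally finite tree $T$ and an open, orientable $3$-manifold $M'$ which is an infinite connected sum of elements of a subcollection $\mathcal{U}\subset\mathcal{M}_3$ in the sense of Definition \ref{connectedsum}, with $M'$ homeomorphic to $M$.

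Finally, Theorem \ref{main} applied to $M'$ yields, for the given bgd-function $v$, a complete Riemannian metric $g$ of bounded geometry and positive scalar curvature on $M'$ whose volume growth function lies in the same growth class as $v$; then $(M',g)$ is as required. The only point that genuinely needs care — more a verification than a real obstacle — is that the connected-sum decomposition furnished by \cite{JW}, and its modification into a tree, really does satisfy the hypotheses of Definition \ref{connectedsum}, namely that it is locally finite with all factors in $\mathcal{M}_3$; once this is in place the rest is immediate. (The conclusion is stated only up to homeomorphism because \cite{JW} provides a homeomorphism rather than a diffeomorphism; by the uniqueness of smooth structures in dimension $3$ one could in fact take $M'$ diffeomorphic to $M$, as in the earlier corollary.)
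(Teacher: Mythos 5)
Your proposal is correct and follows essentially the same route as the paper: invoke the classification of \cite{JW} to realize $M$ up to homeomorphism as an infinite connected sum of spherical manifolds and copies of $\mathbb{S}^2\times\mathbb{S}^1$ (all in $\mathcal{M}_3$), arrange the decomposition along a locally finite tree as in the earlier corollary via Theorem $2.3$ of \cite{BBM}, and then apply Theorem \ref{main}. The paper gives no more detail than this, so your added verifications (local finiteness, membership in $\mathcal{M}_3$) are welcome but not a deviation.
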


For $n>3$, the question of whether a general open manifold of dimension $m$ admitting a metric of uniformly positive scalar curvature can admit a metric of positive scalar curvature with a desired volume growth is still open. 

\section{Volume growth of connected sums of manifolds along lower-dimensional spheres}

\begin{defn}
    For two closed manifolds $X$ and $Y$ of dimension $n$, we can consider a connected sum of $X$ and $Y$ along spheres of codimension $q$, where $q\geq 3$. On $X$, consider an embedding of the sphere of dimension $p$ such that the embedding has a trivial normal bundle of dimension $q$, where $p+q=n$. We remove an embedded product $\mathbb{S}^{p}\times \mathbb{D}^{q}$, resulting in a boundary diffeomorphic to $\mathbb{S}^{p}\times \mathbb{S}^{q-1}$. On $Y$, we similarly consider an embedding of either $\mathbb{S}^p$ or $\mathbb{S}^{q-1}$, and remove an embedded product $\mathbb{S}^p\times \mathbb{D}^{q}$ or $\mathbb{S}^{q-1}\times \mathbb{D}^{p+1}$, again resulting in a boundary diffeomorphic to $\mathbb{S}^{p}\times \mathbb{S}^{q-1}$. Now, we use an orientation-reversing diffeomorphism to identify the two boundary components of $X$ and $Y$. The resulting manifold is called a connected sum of $X$ and $Y$ along lower-dimensional spheres.
\end{defn}

\begin{defn}
    Let $\mathcal{U}$ be a collection of $n$-dimensional closed manifolds, where $n\geq 4$. Consider an infinite rooted tree $T$. We say a manifold $M$ is an infinite connected sum along lower-dimensional spheres of manifolds from $\mathcal{U}$ along $T$ if $M$ is diffeomorphic to a manifold obtained by placing elements of $\mathcal{U}$ on the vertices of $T$, such that an edge between two vertices represents a connected sum along lower-dimensional spheres between the pieces on the respective vertices. 
\end{defn}
\begin{thm}\label{main 2}
    Let $M$ be an open, orientable $m$-manifold that is an infinite connected sum of elements of $\mathcal{U}\subset\mathcal{M}_m$ along lower dimensional spheres. Given a bgd-function $v$, $M$ admits a complete Riemannian metric of bounded geometry with positive scalar curvature such that the volume growth function is in the same growth class as $v$. Moreover, if $\mathcal{U}$ is finite, then the metric has uniformly positive scalar curvature.
\end{thm}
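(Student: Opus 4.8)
The strategy is to mirror the proof of Theorem \ref{main} essentially verbatim, replacing the ordinary connected-sum operation with the connected-sum-along-lower-dimensional-spheres operation throughout, and checking that every ingredient used in the first proof survives this replacement. The three ingredients I would need are: (a) a Gromov--Lawson-type surgery that produces a metric of positive scalar curvature on a connected sum along a $p$-sphere of codimension $q\geq 3$, agreeing with the original metrics away from a small tubular neighbourhood of the removed $\mathbb{S}^p\times\mathbb{D}^q$ and having a product metric $dt^2+g_{\partial}$ near the new boundary (which is now diffeomorphic to $\mathbb{S}^p\times\mathbb{S}^{q-1}$ rather than $\mathbb{S}^{m-1}$); (b) a suitable family of auxiliary ``filler'' pieces to play the role of the spheres $S_1,S_2,S_3$, each carrying a metric of positive scalar curvature with $\mathbb{S}^p\times\mathbb{S}^{q-1}$-boundaries equipped with the same product metric; and (c) the combinatorial tree construction of Section 2 and Proposition \ref{lnj}, which is purely about the shape of the tree and the volume parameters and is insensitive to which gluing operation sits on the edges.

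For ingredient (a) I would invoke the surgery-theoretic form of the Gromov--Lawson construction: it is classical that positive scalar curvature is preserved under surgery in codimension $\geq 3$ (this is exactly the setting $q\geq 3$ imposed in the definition), and the cited reference \cite{MWalsh} carries out the construction with precisely the boundary-control we need, namely that one can arrange the metric to be a Riemannian product $dt^2+g_{\mathbb{S}^p(\rho_1)\times\mathbb{S}^{q-1}(\rho_2)}$ on a collar of the boundary, with the radii $\rho_1,\rho_2$ chosen as small as desired and the modification confined to an arbitrarily small neighbourhood of the surgery sphere. The only new point relative to Theorem \ref{main} is bookkeeping: the ``standard sphere'' model is replaced by a fixed closed manifold on which we do the surgery, so in ingredient (b) I would take the filler pieces to be obtained by performing $1$, $2$, or $3$ such surgeries on a fixed closed manifold admitting a positive-scalar-curvature metric (for instance $\mathbb{S}^n$ itself, or any convenient element of $\mathcal{M}_m$), exactly as the $S_i$ in Section 3 were obtained from $\mathbb{S}^m$.

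With (a) and (b) in hand, Proposition \ref{p13} goes through unchanged: the argument there never used the dimension or topology of the boundary spheres, only that the boundaries carry product metrics, that one can attach cylinders $[0,T]\times(\text{boundary})$ to force the ``max distance achieved on $\partial^+$'' condition, that there are uniform curvature and injectivity-radius bounds over the finitely many piece-types, and that $\partial^+Q_j\cong\partial^-Q_{j+1}$ isometrically by construction; each of these is a formal consequence of the product-collar normal form and holds here as well. Likewise the definition of $r$, the function $z$, Proposition \ref{lnj}, and the two-sided comparison $d(o,x)\sim r(x)$ in the proof of Theorem \ref{main} are stated entirely in terms of the tree $T_{S,v}$, the parameters $l$, $t_j$, $d$, and conditions (1)--(9) of Proposition \ref{p13}, so they transfer with no change. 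Finally, the diffeomorphism $R_{S,v}\cong M$ is again established by cutting the tree at the edges $\{n_j+t_j,n_j+t_j+1\}$ and noting that adding finitely many surgered filler pieces does not change the diffeomorphism type at each stage of an exhaustion; and if $\mathcal{U}$ is finite the scalar curvature is bounded below away from zero on the finitely many piece-types, giving uniform positivity.

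The only genuine obstacle is verifying ingredient (a) with the required degree of control: one must be sure that the codimension-$\geq 3$ surgery construction can be performed so that the resulting metric is an \emph{exact} Riemannian product on a collar of $\mathbb{S}^p\times\mathbb{S}^{q-1}$ (not merely close to one), since the gluing of pieces along the tree demands isometric product collars on both sides. This is precisely what the detailed treatment in \cite{MWalsh} provides, so I would cite it for this point rather than reprove it; modulo that citation, the rest of the proof is a line-by-line transcription of the proof of Theorem \ref{main}.
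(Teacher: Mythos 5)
Your overall strategy --- transplanting the proof of Theorem \ref{main} and checking that the Gromov--Lawson surgery in codimension $q\geq 3$ yields exact product collars --- is the right one and is what the paper does, but your ingredient (b) contains a genuine gap, and it occurs at precisely the one point where the generalized connected sum behaves differently from the ordinary one. You propose to build the filler pieces by removing $1$, $2$, or $3$ copies of $\mathbb{S}^p\times\mathbb{D}^q$ from $\mathbb{S}^m$, and you then assert that inserting these surgered fillers ``does not change the diffeomorphism type at each stage of an exhaustion.'' For ordinary connected sum this holds because $\mathbb{S}^m$ is the identity element for $\#$; for the generalized operation it fails. Since $\mathbb{S}^m=\partial(\mathbb{D}^{p+1}\times\mathbb{D}^q)=(\mathbb{S}^p\times\mathbb{D}^q)\cup(\mathbb{D}^{p+1}\times\mathbb{S}^{q-1})$, removing a standardly embedded $\mathbb{S}^p\times\mathbb{D}^q$ from $\mathbb{S}^m$ leaves $\mathbb{D}^{p+1}\times\mathbb{S}^{q-1}$, so capping a boundary $\mathbb{S}^p\times\mathbb{S}^{q-1}$ with your $S_1$-type filler performs an actual surgery on the embedded $p$-sphere rather than undoing the decomposition. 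Likewise $\mathbb{S}^m$ minus two parallel copies of $\mathbb{S}^p\times\mathbb{D}^q$ is not a collar $\mathbb{S}^p\times\mathbb{S}^{q-1}\times I$ (for $m=4$, $p=1$, $q=3$ the former is simply connected while the collar has $\pi_1=\mathbb{Z}$), so splicing your $S_2$-type filler into an edge of the trunk changes the manifold. As the tree contains infinitely many fillers, the resulting $R_{S,v}$ would not be diffeomorphic to $M$.

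The paper avoids this by modifying only the trunk connectors and leaving the volume-controlling sphere tree untouched: the pieces inserted between consecutive $Q_j$'s are $R=\mathbb{S}^p\times\mathbb{S}^{q-1}\times[0,L]$ with an ordinary solid disc removed. Inserting such a collar is diffeomorphically trivial, and the removed disc creates a round $\mathbb{S}^{m-1}$ boundary from which the non-trunk pieces $S_1,S_2,S_3$ --- still ordinary spheres attached by ordinary connected sums, exactly as in Theorem \ref{main} --- hang, contributing volume without altering the topology. One also needs the small metric step the paper performs with Lemma 3 of \cite{GL}: interpolating the collar metric $dt^2+g_{S_{p,\delta_1}}+g_{S_{q-1,\delta_2}}$ produced on $\partial Q_j$ by the surgery to the fixed model $dt^2+g_{S_{p,\delta}}+g_{S_{q-1,\delta}}$ carried by $R$, so that the gluings are genuinely isometric. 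With ingredient (b) replaced by this choice of fillers, the rest of your outline (Proposition \ref{p13}, Proposition \ref{lnj}, and the two-sided comparison of $d(o,x)$ with $r(x)$) does transfer as you describe.
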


\begin{proof}
Consider a manifold $M$ of dimension $m$, which is a connected sum of elements of $\mathcal{M}_m$ along lower-dimensional spheres. Then $M$ is formed by taking a connected sum along some tree $T$. As before, denote by $Q_j$ the union of the elements which are attached to the vertices of level $j$, along with the boundary tori of the form $\mathbb{S}^p\times\mathbb{S}^{q-1}$ where $p+q=m$. We proceed similarly to the construction in the connected sum case. Take a bgd-function $v$ such that $v$ satisfies the conditions in Lemma \ref{lw}. Fix a subset $S$ of vanishing lower density of $\mathbb{Z}_+$, and construct an admissible tree $T_{S,v}$ with growth $v$. We again attach different pieces to the vertices of $T_{S,v}$, with the following changes. The pieces $Q_j$ now have $\mathbb{S}^p\times\mathbb{S}^{q-1}$ boundary components, and the other trunk pieces, which we call $R$, are of the form $\mathbb{S}^p\times\mathbb{S}^{q-1}\times I$ with a solid disc removed. Here $I$ is a closed interval $[0,L]$ for some $L$. The non-trunk pieces are $m$-spheres with boundary, of types $S_1$, $S_2$ and $S_3$ as before. We now construct metrics on the $Q_j$ and $R$ so that they satisfy analogous requirements to Proposition \ref{p13}. 

\subsection*{Metric on the pieces}

We choose $l>1$ and put $L=\frac{l}{3}$, so that the piece $R$ is of the form $\mathbb{S}^p\times\mathbb{S}^{q-1}\times [0,\frac{l}{3}]$. As before, we choose a $\delta>0$, where $\delta$ is small. The torus $\mathbb{S}^p\times\mathbb{S}^{q-1}$ can be endowed with the product metric $g_{S_{p,\delta}} + g_{S_{q-1,\delta}}$, where $g_{S_{k,\delta}}$ is the standard metric on a sphere of dimension $k$ of radius $\delta$. Then the product $\mathbb{S}^p\times\mathbb{S}^{q-1}\times [0,\frac{l}{3}]$ can be endowed with the metric $g_{S_{p,\delta}} + g_{S_{q-1,\delta}} + dt^2$. Now, we remove a solid disc of radius $\frac{\delta}{2}$ from the interior of $\mathbb{S}^p\times\mathbb{S}^{q-1}\times [0,\frac{l}{3}]$. We modify the metric on a small tubular neighbourhood of the resulting boundary sphere according to the construction due to Gromov-Lawson, so that in a very small neighbourhood of the boundary sphere, the metric is a product metric of the form $dt^2+g_{S_{\delta/4}}$. Note that when $\delta<\frac{l}{3}$, the condition $\frac{l}{3}\leq t_{R}\leq T_{R}\leq l$ is satisfied by the metric on $R$. We again denote the minimum injectivity radius by $i_{min}$ and the maximum absolute value of the sectional curvatures by $K_{max}$. Note that since $\delta$ is small, $K_{max}>1$.

For the metric on the $Q_j$, we follow the same construction on each component as before, except we now remove some embedded copies of $\mathbb{S}^p\times \mathbb{D}^q$ to result in boundaries of the form $\mathbb{S}^p\times \mathbb{S}^{q-1}$. Consider a metric of positive scalar curvature on $Q_j'\in \mathcal{M}_m$, and remove the embedded tori resulting in the boundary components. Scaling up the metric, if required, again ensures that the absolute values of the sectional curvatures are bounded above by $1$, and the injectivity radius is bounded below by $i_{min}$. We scale the metric further to obtain a tubular neighbourhood of radius $2\delta$ of each embedded $\mathbb{S}^p$, thus giving us an embedded $\mathbb{S}^p\times\mathbb{D}^q(2\delta)$. We identify $\mathbb{S}^p\times\mathbb{D}^q(2\delta)\setminus \{0\}$ with $\mathbb{S}^p\times\mathbb{S}^{q-1}\times (0,2\delta)$. Following the construction by Gromov and Lawson, we modify the metric on $\mathbb{S}^p\times\mathbb{S}^{q-1}\times (0,2\delta)$ so that near the boundary $\mathbb{S}^p\times\mathbb{D}^q(2\delta)\times \{0\}$ it is a product metric $dt^2+g_{S_{p,\delta_1}} + g_{S_{q-1,\delta}}$ for some $\delta_1,\delta_2>0$. Scale the metric again to ensure $\delta_1,\delta_2>\delta$. Note that the $\delta_1$ and $\delta_2$ might be different for the different boundary components, but since there are finitely many such components such scaling is possible. Using Lemma $3$ of \cite{GL}, we get an interval $[0,a]$ such that there is a positive scalar curvature metric on the cylinder $\mathbb{S}^p\times \mathbb{S}^{q-1}\times [0,a]$ which restricts to the metric $dt^2+g_{S_{p,\delta_1}} + g_{S_{q-1,\delta_2}}$ near one boundary and to $dt^2+g_{S_{p,\delta}} + g_{S_{q-1,\delta}}$ near the other boundary. We attach this cylinder to the boundary component of $Q_j$ via the isometry, which results in the new boundary component having a metric of the form $dt^2+g_{S_{p,\delta}} + g_{S_{q-1,\delta}}$ in a small enough neighbourhood. Thus, the $Q_j$ can be attached to the $R$ via their boundary tori, since the boundaries are isometric and both manifolds have product metrics near the boundary. The metric on $Q_j$ has the bounds of $K_{max}$ for the absolute values of the sectional curvatures and $i_{min}$ for the injectivity radius. This is because the initial metric has the absolute value of sectional curvature bounded above by $1$, and on deforming it, the sectional curvature increases until the torus with metric $g_{S_{p,\delta}} + g_{S_{q-1,\delta}}$, following the computation in \cite{MW}. The absolute values of the sectional curvatures of this torus are bounded above by $K_{max}$, which gives us the bound for the $Q_j$. For the pieces $S_1$ and $S_2$, the metric is constructed as in the connected sum case, and near the boundary spheres, the metric is of the form $dt^2+g_{S_{\delta/4}}$. If $\mathcal{U}\subset\mathcal{M}_n$ is finite, there are finitely many compact pieces in the construction, and we have defined metrics of positive scalar curvature on each of those pieces. Hence, the infimum of the scalar curvatures on these pieces is bounded away from $0$, and hence we get a metric of uniformly positive scalar curvature.

With the metric defined on all the pieces, attaching the pieces along $T_{S,v}$ gives rise to a metric on the resulting manifold which is diffeomorphic to $M$. Choosing the set $S$ suitably according to Proposition \ref{lnj}, a similar argument as before shows that the volume growth function of the manifold is of the same growth type as $v$. This completes the proof.

\end{proof}


\end{document}